\newtheorem{theorem}{Theorem}[section]
\newtheorem{lemma}[theorem]{Lemma}
\theoremstyle{definition}
\theoremstyle{remark}
\begin{document}
\baselineskip 15.5pt

\title[Endomorphisms preserving coordinates] %of polynomial algebras]
{Endomorphisms preserving coordinates of polynomial algebras}

\author[Yun-Chang Li and Jie-Tai Yu]
{Yun-Chang Li and Jie-Tai Yu}
\address{Department of Mathematics, The University of Hong
Kong, Hong Kong SAR, China} \email{liyunch@hku.hk,\ liyunch@163.com}
\address{Department of Mathematics, The University of Hong
Kong, Hong Kong SAR, China} \email{yujt@hku.hk,\
yujietai@yahoo.com}

\thanks
{The research of Yun-Chang Li  was partially supported by a
postgraduate studentship in the University of Hong Kong.}

\thanks{The research of Jie-Tai Yu was partially
supported by an RGC-GRF Grant.}

\subjclass[2010] {Primary  13F20, 13W20,
14R10.}

\keywords{Automorphisms, endomorphisms, coordinates, linear coordinates,
tame coordinates, polynomial
algebras,  Jacobian.}

\maketitle

\noindent $\mathbf{Abstract.}$\  It is proved that the Jacobian
 of a $k$-endomorphism of $k[x_1,\dots,x_n]$
over a field $k$ of characteristic zero taking
every tame coordinate to a coordinate, must be a nonzero constant in $k$.
It is also proved that the Jacobian  of an $R$-endomorphism
of $A:=R[x_1,\dots,x_n]$ (where $R$ is a polynomial ring in finite number of variables
over an infinite field $k$), taking every $R$-linear coordinate of $A$
to an $R$-coordinate of $A$, is a nonzero constant in $k$.

\smallskip

\section{\bf Introduction and the main results}

\noindent Van den Essen and Shpilrain  \cite{ess} asked
the following

\vspace{5mm}

\noindent {\bf Question 1.} {\it Let $k$ be a field. Is it true that every $k$-endomorphism of
$k[x_1,...,x_n]$ taking every coordinate  to a coordinate is  an automorphism?}

\

\noindent In \cite{ess} the question was answered by van den Essen and
Shpilrain themselves in the positive
for an arbitrary field $k$ when $n=2$.
The question  was solved by Jelonek  \cite{essen} affirmatively for algebraically
closed fields $k$ of characteristic zero for all $n$ by geometric method based on

\

\noindent {\bf Derksen's observation.} (see \cite{ess}) Let $k$
be an algebraically closed field. A $k$-endomorphism $\phi$
of $k[x_1,\dots,x_n]$ taking every $k$-linear coordinate of $k[x_1, \dots, x_n]$
to a coordinate of $k[x_1, \dots, x_n]$ must have nonzero constant Jacobian
$J(\phi)$ in $k$.

\

\noindent For the related {\it linear coordinate preserving problem} for polynomial
algebras,
see Mikhalev, Yu and Zolotykh \cite{myz}, Cheng and ven den Essen  \cite{ec},
and Gong and Yu \cite{gy1}.
For another related {\it automorphic orbit problem} for polynomial algebras, see
van den Essen and Shpilrain \cite{ess},  Yu \cite{yu},  Gong and Yu \cite{gy},
and Li and Yu \cite{ly}.

\

\noindent The purpose of this note is to prove the following two new results.

\begin{theorem} \label{fixingvariable}
Let $R:=k[x_{n+1},\dots,x_{n+m}]$ where $k$ is an infinite field, $m>0$.
Let $\phi:=(f_1,\dots,f_n)$ be an $R$-endomorphism
of $A:=R[x_1,\dots,x_n]$ taking every
$R$-linear coordinate of $A$ to an $R$-coordinate of $A$.
Then $$J(\phi)=J_{x_1,\dots,x_n}(f_1,\dots,f_{n}):=\det[({f_i})^{\prime}_{x_j}]\in k^*.$$
\end{theorem}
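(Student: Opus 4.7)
The strategy is to specialize the parameter variables $x_{n+1},\dots,x_{n+m}$ to closed points of $\operatorname{Spec}\bar R$ (with $\bar R:=\bar k\otimes_k R$, where $\bar k$ is an algebraic closure of $k$) and then to invoke Derksen's observation on the resulting $\bar k$-endomorphisms. For $c=(c_{n+1},\dots,c_{n+m})\in\bar k^m$, the evaluation map $\psi_c\colon\bar A\to\bar k[x_1,\dots,x_n]$ (where $\bar A:=\bar k\otimes_k A$) sending $x_{n+j}\mapsto c_{n+j}$ and fixing $x_1,\dots,x_n$ induces a $\bar k$-endomorphism $\bar\phi_c=(\psi_c(f_1),\dots,\psi_c(f_n))$ of $\bar k[x_1,\dots,x_n]$. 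Since $\psi_c$ commutes with $\partial/\partial x_j$ for $j\le n$, one has $J(\bar\phi_c)=\psi_c(J(\phi))$.

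To apply Derksen's observation to $\bar\phi_c$, I first check that it sends every $\bar k$-linear coordinate of $\bar k[x_1,\dots,x_n]$ to a coordinate. Given $\lambda=(\lambda_1,\dots,\lambda_n)\in\bar k^n\setminus\{0\}$ with entries in a finite extension $L/k$, the form $\ell=\sum\lambda_ix_i$ is an $R_L$-linear coordinate of $A_L:=L\otimes_k A$ (its constant row is unimodular in $R_L:=L\otimes_k R$). The key technical ingredient is that the base-changed endomorphism $\phi_L:=\phi\otimes_k L$ inherits the hypothesis, i.e., still sends every $R_L$-linear coordinate of $A_L$ to an $R_L$-coordinate. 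Granted this, $\phi_L(\ell)$ completes to $A_L=R_L[\phi_L(\ell),g_2,\dots,g_n]$ for some $g_2,\dots,g_n\in A_L$, and applying $\psi_c$ (extended to $A_L$ via the inclusion $L\hookrightarrow\bar k$) yields $\bar k[x_1,\dots,x_n]=\bar k[\bar\phi_c(\ell),\psi_c(g_2),\dots,\psi_c(g_n)]$. A transcendence-degree count forces these $n$ generators to be algebraically independent over $\bar k$, so $\bar\phi_c(\ell)$ is a coordinate. Derksen's observation then delivers $J(\bar\phi_c)\in\bar k^*$ for every $c\in\bar k^m$.

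Writing $J(\phi)=\sum_\alpha p_\alpha(x_{n+1},\dots,x_{n+m})\,x^\alpha$ with $\alpha$ ranging over multi-indices in $x_1,\dots,x_n$ and $p_\alpha\in R$, the identity $\psi_c(J(\phi))=J(\bar\phi_c)\in\bar k^*$ at every $c\in\bar k^m$ translates into $p_\alpha(c)=0$ for all $\alpha\neq 0$ and $p_0(c)\in\bar k^*$ at every such $c$. The weak Nullstellensatz applied in $\bar R$ forces $p_\alpha\equiv 0$ for $\alpha\neq 0$ and $p_0\in\bar k^*$; combined with $p_0\in R$, this gives $p_0\in R\cap\bar k^*=k^*$, and hence $J(\phi)=p_0\in k^*$ as claimed.

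The principal obstacle is the descent step: showing that for every finite extension $L/k$, the base change $\phi_L$ retains the property of sending $R_L$-linear coordinates of $A_L$ to $R_L$-coordinates of $A_L$. This is the one ingredient not supplied by pure specialization and Derksen's observation, and I expect it to form the technical heart of the proof---most naturally via faithfully flat descent along $R\hookrightarrow R_L$ combined with the characterization of $R$-coordinates as partial sets completing to a Jacobian matrix of unit determinant.
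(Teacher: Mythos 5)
Your reduction to Derksen's observation has a genuine gap, and it is exactly the step you flag at the end. The hypothesis only concerns linear forms $\sum h_i x_i$ with $h_i\in R$, so after specializing at a fixed $c\in\bar k^m$ you only learn that $\bar\phi_c$ sends linear coordinates with coefficients $h_i(c)$ lying in the finite extension $k(c_{n+1},\dots,c_{n+m})$ to coordinates --- never all of $\bar k$-linear coordinates, which is what Derksen's observation requires (unless $k$ is already algebraically closed). This is not a removable technicality: the paper itself notes that the conclusion fails when $R$ is replaced by a non-algebraically closed field (counterexamples of Mikhalev--Yu--Zolotykh and of Gong--Yu), so an endomorphism that merely preserves linear coordinates with coefficients in a fixed non-closed field need not have constant Jacobian. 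Your specialization collapses the problem into precisely that bad situation. Nor does ``faithfully flat descent'' along $R\hookrightarrow R_L$ rescue the base-change claim: the offending form $\ell=\sum\lambda_ix_i$ with $\lambda_i\in L\setminus k$ is simply not defined over $R$, so the hypothesis on $\phi$ says nothing about its image, and proving that $\phi_L$ inherits the property is essentially as hard as the theorem itself.

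The paper's proof never invokes Derksen's observation and goes in the opposite direction. Assuming $J(\phi)\notin k$, it arranges (composing with $(x_1+x_{n+1},x_2,\dots,x_n)$ if necessary) that $x_{n+1}$ occurs in $J(\phi)$, then applies Lemma \ref{infinite}: choose $a_1,\dots,a_n\in k$ and an algebraic $b\in K$ at which the Jacobian vanishes, and realize the resulting $k[b]$-linear dependence among the gradient rows by \emph{polynomials} $h_i\in k[x_{n+1}]$ with $h_i(b)$ equal to the desired algebraic coefficients. Then $p=x_1+h_2(x_{n+1})x_2+\dots+h_n(x_{n+1})x_n$ is an honest $R$-linear coordinate of $A$, while $\phi(p)=f_1+h_2(x_{n+1})f_2+\dots+h_n(x_{n+1})f_n$ has vanishing gradient in $x_1,\dots,x_n$ at $(a_1,\dots,a_n,b)$ and so cannot be an $R$-coordinate. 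This is exactly where the polynomial ring $R$ (as opposed to a field) earns its keep, and it is the ingredient your specialization discards: in effect one must choose the specialization point \emph{after}, and depending on, the linear form one needs, which is what Lemma \ref{infinite} accomplishes.
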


\

\noindent Note if we replace $R$ by a field $k$,
the statement of Theorem \ref{fixingvariable} is generally not true,
unless $k$ is algebraically closed
(Derksen's observation). For non-algebraically closed
fields $k$, see  Mikhalev, J.-T.Yu and Zolotykh \cite{myz}, and Gong and Yu \cite{gy1} for counterexamples.

\begin{theorem}\label{jacobian}
Let $\phi:=(f_1,\dots,f_n)$ be a $k$-endomorphism of

\noindent $k[x_1,\dots,x_n]$ over a field $k$ of
characteristic zero taking every tame        \  \  \  \  \ coordinate of
$k[x_1,\dots,x_n]$ to a coordinate of $k[x_1,\dots,x_n]$.
Then $$J(\phi)=J_{x_1,\dots,x_n}(f_1,\dots,f_n):=\det[({f_i})^{\prime}_{x_j}]\in k^*.$$
\end{theorem}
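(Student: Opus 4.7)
The plan is to reduce Theorem~\ref{jacobian} to Theorem~\ref{fixingvariable} by adjoining parameter variables, which converts the tame coordinate hypothesis on $\phi$ over $k$ into an $R$-linear coordinate hypothesis for an extended endomorphism over $R$. I would introduce new indeterminates $T_1, \dots, T_n$, set $R = k[T_1, \dots, T_n]$ and $A = R[x_1, \dots, x_n]$, and let $\Phi \colon A \to A$ be the $R$-endomorphism defined by $\Phi(T_j) = T_j$ and $\Phi(x_i) = f_i$. Since the $f_i$ involve none of the $T_j$, one has $J_{x_1,\dots,x_n}(\Phi) = J(\phi)$ in $A$. Thus, if $\Phi$ can be shown to take every $R$-linear coordinate of $A$ to an $R$-coordinate of $A$, then Theorem~\ref{fixingvariable} immediately yields $J(\phi) = J(\Phi) \in k^*$.

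The core task is therefore to verify that hypothesis for $\Phi$. An $R$-linear coordinate of $A$ has the form $\ell = r_0 + \sum_{i=1}^n r_i x_i$ with $r_0 \in R$ and $(r_1,\dots,r_n)$ a unimodular row in $R^n$, which by Quillen-Suslin extends to a matrix in $\operatorname{GL}_n(R)$. I aim to show $\Phi(\ell) = r_0 + \sum r_i f_i$ is an $R$-coordinate of $A$. The tame hypothesis already delivers this conclusion fiberwise over $k$-rational points of $\operatorname{Spec} R$: for every $t \in k^n$, unimodularity forces $(r_1(t), \dots, r_n(t)) \neq 0$, so $r_0(t) + \sum r_i(t) x_i$ is a $k$-linear---hence tame---coordinate of $k[x_1, \dots, x_n]$, and its $\phi$-image $r_0(t) + \sum r_i(t) f_i$ is therefore a coordinate. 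Moreover, the nonlinear elementary content of the tame hypothesis, namely that $f_1 + g(f_2, \dots, f_n)$ is a coordinate for every $g \in k[y_2, \dots, y_n]$ (and likewise with permuted indices), supplies the flexibility needed to build complementary coordinates. Concretely, I would use the Quillen-Suslin completion of $(r_1,\dots,r_n)$ to reduce to the case $\ell = r_0 + x_1$, and then exhibit an explicit $R$-coordinate system $(r_0 + f_1, h_2, \dots, h_n)$ of $A$, each $h_i$ constructed from tame images of $\phi$ applied to iterated elementary shifts, with $R$-coefficients assembled from the $T_j$.

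The main obstacle is precisely this upgrading step: promoting the fiberwise coordinate condition at every closed $k$-point of $\operatorname{Spec} R$ to a genuine $R$-coordinate of $A$. This is a residual-variable-type problem; a polynomial in $A$ that is a variable under every $k$-specialization need not in general be an $R$-variable, so the construction must exploit the nonlinear elementary tame data uniformly in the parameters $T_j$ rather than merely specialization by specialization. If the direct construction proves too rigid, an alternative route is to pass to the algebraic closure $\bar k$ and attempt to reduce to Derksen's observation by showing $\bar\phi$ sends every $\bar k$-linear coordinate of $\bar k[x_1, \dots, x_n]$ to a coordinate; that approach too requires upgrading the $k$-linear tame input to the $\bar k$-linear setting, now via Galois descent applied to the elementary tame coordinates.
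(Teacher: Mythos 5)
Your reduction to Theorem~\ref{fixingvariable} hinges entirely on the claim that the extended map $\Phi$ sends every $R$-linear coordinate of $A=k[T_1,\dots,T_n][x_1,\dots,x_n]$ to an $R$-coordinate, and that claim is never established; you correctly identify it as the main obstacle, but it is not a technical loose end --- it is the whole difficulty, and in this form it is at least as hard as the theorem you are trying to prove. The hypothesis on $\phi$ only concerns tame coordinates of $k[x_1,\dots,x_n]$, i.e.\ polynomials with coefficients in $k$; it gives no control whatsoever over $\Phi(r_0+\sum r_i x_i)$ when the $r_i$ genuinely involve the new variables $T_j$. The fiberwise argument you sketch (specializing the $T_j$ at $k$-points) only shows that $r_0+\sum r_i f_i$ is a \emph{residual} coordinate, and, as you yourself note, a polynomial that is a coordinate under every specialization need not be an $R$-coordinate; there is no mechanism in your outline for closing that gap, and the ``explicit $R$-coordinate system built from iterated elementary shifts'' is not constructed. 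The fallback via Derksen's observation over $\bar k$ has the same unproved upgrading step. So the proposal is a conditional reduction, not a proof.

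The paper avoids this entirely by choosing the ``parameter ring'' to be one that $\phi$ already respects. Since $x_n$ is a tame coordinate, $f_n$ is a coordinate, so after composing with an automorphism one may assume $f_n=x_n$; then $\phi$ is automatically a $k[x_n]$-algebra map on the relevant expressions, and the tame coordinates $q=x_1+h_2(x_n)x_2+\cdots+h_{n-1}(x_n)x_{n-1}+h_n(x_n)$ (components of elementary automorphisms) are sent exactly to $u=f_1+h_2(x_n)f_2+\cdots+h_{n-1}(x_n)f_{n-1}+h_n(x_n)$. Lemma~\ref{zerochar} (where characteristic zero enters, to integrate the $x_n$-partial) produces $h_2,\dots,h_n$ making the full gradient of $u$ vanish at a point of $K^n$ whenever $x_n$ occurs in $J(\phi)$, so $u$ cannot be a coordinate --- a contradiction. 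If you want to salvage your strategy, replace the freely adjoined $T_j$ by $x_n$ itself after the normalization $f_n=x_n$; that is precisely the paper's argument, and it makes the residual-coordinate problem disappear because no specialization is ever needed.
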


\

\noindent In the sequel $k$ always denotes a field with a fixed algebraic closure $K$.
Endomorphisms (automorphisms) always means $k$-endomorphisms ($k$-automorphisms)
unless otherwise specified.

\

\section{\bf Preliminaries}

\noindent Recall
that an automorphism of $k[x_1,\cdots,x_n]$ is tame if it can be decomposed to  product of  linear  and elementary automorphisms, and a coordinate $p$ (i.e. a component
of an automorphism) is called tame if $p$ is a component of a tame automorphism. For an endomorphism
$\phi:=(f_1,\dots,f_n)$ of $k[x_1,\dots,x_n]$,
we use $$J(\phi):=J(f_1,\dots,f_n):=J_{x_1,\dots,x_n}(f_1,\dots,f_n)$$ to denote
its Jacobian $\det[({f_i})^{\prime}_{x_j}]$.

\

\noindent Let  $\phi:=(f_1,\dots,f_n)$ be an endomorphism of $k[x_1,\dots,x_n]$
taking a coordinate $p$ of $k[x_1,\dots,x_n]$ to a coordinate
of $k[x_1,\dots,x_n]$ and let $\sigma:=(g_1,\dots,g_n)$ be any automorphism
of $k[x_1,\dots,x_n]$. Obviously, $$\sigma\circ\phi:=(f(g_1,\dots,g_n),\dots,f_n(g_1,\dots,g_n))$$ is also an endomorphism of $k[x_1,\dots,x_n]$ taking the same
coordinate $p$ to a coordinate. Moreover, $J(\phi)\in k^*$  if and only if $J(\sigma\circ\phi)\in k^*$.

\

\noindent We need the following two lemmas.

\begin{lemma}\label{infinite}
Let $f_1,\dots,f_{n-1}\in k[x_1,\dots,x_n]$
over an infinite field $k$
such that
$$\deg_{x_n}J(f_1,\dots,f_{n-1},x_n)>0.$$
Then there exist $a_1,\dots,a_{n-1}\in k$,\ $b\in K$ and
$h_1,h_2,\dots,h_{n-1}\in k[x_n]$\  \  \
(without loss of
 generality we may assume $h_1=1$ after acting a transposition
 on $\{f_1,\dots,f_{n-1}\}$ )
such that the gradient (partial derivatives) of
$$g:=f_1+h_2(x_n)f_2+\dots+h_{n-1}(x_n)f_{n-1}$$
with respect to $(x_1, \dots, x_{n-1})$
is $(0,\dots,0)$ at the point
$$P=(a_1,\dots,a_{n-1},b).$$
\end{lemma}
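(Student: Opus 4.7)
The plan is to use the hypothesis that $J(f_1,\dots,f_{n-1},x_n)$ has positive $x_n$-degree to find a point $P$ where the $(n-1)\times(n-1)$ Jacobian matrix of $f_1,\dots,f_{n-1}$ in $x_1,\dots,x_{n-1}$ drops rank, and then to convert a null vector of that matrix at $P$ into the required coefficient sequence $(1,h_2(b),\dots,h_{n-1}(b))$.

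First I would cofactor-expand along the last row of $J(f_1,\dots,f_{n-1},x_n)$: since the only nonzero entry of that row is $\partial x_n/\partial x_n=1$, one has $J(f_1,\dots,f_{n-1},x_n)=D:=\det\bigl[\partial f_i/\partial x_j\bigr]_{1\le i,j\le n-1}\in k[x_1,\dots,x_n]$. By hypothesis $\deg_{x_n}D>0$, so writing $D$ as a polynomial in $x_n$ over $k[x_1,\dots,x_{n-1}]$, its leading coefficient $c(x_1,\dots,x_{n-1})$ is a nonzero polynomial. Since $k$ is infinite, I can choose $a_1,\dots,a_{n-1}\in k$ with $c(a_1,\dots,a_{n-1})\neq 0$, so that the univariate polynomial $D(a_1,\dots,a_{n-1},x_n)\in k[x_n]$ is non-constant and hence admits a root $b\in K$. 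At $P:=(a_1,\dots,a_{n-1},b)$ the matrix $M(P):=\bigl[(\partial f_i/\partial x_j)(P)\bigr]_{1\le i,j\le n-1}$ is then singular.

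To extract the $h_i$, note that the entries of $M(P)$ lie in $k[b]\subseteq K$, and since $b$ is algebraic over $k$ the ring $k[b]$ is a field. The rows $v_1,\dots,v_{n-1}$ of $M(P)$ therefore admit a nontrivial linear dependence $\sum c_i v_i=0$ with $c_i\in k[b]$. After a transposition on $\{f_1,\dots,f_{n-1}\}$ one may assume $c_1\neq 0$, and after dividing through by $c_1$ also $c_1=1$. For each $i\ge 2$, using $k[b]\cong k[x_n]/(m(x_n))$, where $m$ is the minimal polynomial of $b$ over $k$, pick $h_i\in k[x_n]$ with $h_i(b)=c_i$; set $h_1:=1$. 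Then $g:=f_1+h_2(x_n)f_2+\dots+h_{n-1}(x_n)f_{n-1}$ satisfies $(\partial g/\partial x_j)(P)=\sum_{i=1}^{n-1}h_i(b)(\partial f_i/\partial x_j)(P)=0$ for all $j=1,\dots,n-1$, which is exactly the required vanishing of the gradient at $P$.

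I expect the main conceptual obstacle to be the passage from an arbitrary null vector over $K$ to coefficients realized as evaluations at $b$ of polynomials in $k[x_n]$; this is dissolved by the observation that $k[b]$ is already a field and the matrix already has $k[b]$-entries, so a $k[b]$-rational null vector exists automatically. The only other subtlety is the use of infiniteness of $k$, which is needed precisely to prevent the leading $x_n$-coefficient $c$ of $D$ from vanishing identically at the chosen specialization $(a_1,\dots,a_{n-1})$.
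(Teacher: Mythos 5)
Your proposal is correct and follows essentially the same route as the paper: specialize $(x_1,\dots,x_{n-1})$ using the infiniteness of $k$ so that the leading $x_n$-coefficient of $J(f_1,\dots,f_{n-1},x_n)$ survives, take a root $b\in K$ of the resulting univariate polynomial, find a $k[b]$-rational null vector of the singular $(n-1)\times(n-1)$ Jacobian at $P$, normalize its first entry to $1$, and lift the entries to polynomials $h_i(x_n)$ via evaluation at $b$. Your explicit cofactor expansion along the last row and the remark that $k[b]$ is already a field (so no descent of the null vector is needed) make the argument slightly more transparent than the paper's, but the substance is identical.
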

\begin{proof}
Let
$$G(x_1,\dots,x_n):=J(f_1,\dots,f_{n-1},x_n)=\sum_{i=0}^mp_i(x_1,\cdots,x_{n-1})x_n^i$$ where $m\geq 1$ and $p_m(x_1,\cdots,x_{n-1})\not=0$. Since $k$ is infinite,
we may choose $a_1,\cdots,a_{n-1}\in k$ such that $p_m(a_1,\cdots, a_{n-1})\in k^*$. Then $G(a_1,\cdots,a_{n-1},x_n)\in k[x_n]-k$ and there exists some $b\in K$ such that
$$G(a_1,\cdots,a_{n-1},b)=0.$$
Hence $E=k(b)=k[b]$ is a finite algebraic extension of $k$. Let $P=(a_1,\cdots, a_{n-1}, b)$ be a point in $E^n$ and for each $f(x_1,\dots,x_n)\in k[x_1,\dots,x_n]$, define
$$f(x_1,\dots,x_n)\vert_P:=f(a_1,\cdots, a_{n-1},b)$$
We have the first $(n-1)$ rows of the determinant
$J(f_1,\dots,f_{n-1},x_n)\vert_P$ are
$k[b]$-linearly dependent. Therefore, there exist
$$h_1(x), h_2(x), \cdots, h_{n-1}(x)\in k[x]$$ such that
$$(h_1(b)f_1+h_2(b)f_2+\cdots+h_{n-1}(b)f_{n-1})^{\prime}_{x_i}\vert_P=0$$
for all $i=1,\cdots, n-1$ and not all  $h_1(b),\cdots, h_{n-1}(b)$ are  zero.
Without loss of generality, we may assume $h_1(b)\not=0$, replace $h_i(b)$ by $h_1^{-1}(b)h_i(b)$ for all $i$, we may assume $h_1(x)=1$.
Now define $$g:=f_1+h_2(x_n)f_2+\cdots+h_{n-1}(x_n)f_{n-1}\in k[x_1,\dots,x_n].$$ It is easy to see that
$$g^{\prime}_{x_i}\vert_P=(f_1+h_2(b)f_2+\cdots+h_{n-1}(b)f_{n-1})^{\prime}_{x_i}\vert_P=0,\ \ \ \
\forall i=1,\cdots,n-1.$$
\qed

\

\begin{lemma} \label{zerochar}
Let $f_1,\dots,f_{n-1}\in k[x_1,\dots,x_n]$
over a field $k$ of characteristic zero
such that $\deg_{x_n}J(f_1,\dots,f_n,x_n)>0$.
Then there exist $a_1,\dots,a_{n-1}\in k$,\ $b\in K$ and
$h_1,h_2,\dots,h_{n-1},h_n\in k[x_n]$
(without loss of generality, we may assume $h_1=1$ after acting
a transposition on $\{f_1, \dots,f_{n-1}\}$)
such that the gradient (partial derivatives) of
$$u:=f_1+h_2(x_n)f_2+\dots+\dots+h_{n-1}(x_n)f_{n-1}+h_n$$
with respect to $(x_1,\dots,x_{n-1},x_n)$
is $(0, \dots, 0, 0)$ at the point
$$P=(a_1, \dots, a_{n-1}, b).$$
\end{lemma}
\begin{proof}
Using the same notations in the Lemma \ref{infinite},
$u=g+h_n$, where $h_n=h_n(x_n)$ is to be determined.
Define $v(b):=g^{\prime}_{x_n}\vert_P\in k[b]$
for some $v(x)=c_0+c_1x+\dots+c_sx^s\in k[x]$.
Define (here we need $k$ to be characteristic zero)
$$h_n(x_n):=-c_0x_n-(1/2)c_1x_n^2-\dots-(1/(s+1))c_sx_n^{s+1}\in k[x_n].$$
It is easy to see that
$$u^{\prime}_{x_i}\vert_P=g^{\prime}_{x_i}\vert_P+h_n(x_n)^{\prime}_{x_i}=0+0=0,\ \ \ \forall i=1,\cdots, n-1$$
and $u^{\prime}_{x_n}\vert_P=v(b)-v(b)=0.$
\end{proof}

\

\section{\bf Proofs of the main results}

\noindent {\bf Proof of Theorem \ref{fixingvariable}.} For simplicity we only present the proof for
$m=1$, the general case can be proved similarly, by an enhanced version of Lemma \ref{infinite}.
Suppose on the contrary, $J(\phi)$ is not constant.
If $x_{n+1}$ does not appear in $J(\phi)$.
We may assume $\deg_{x_1}J(\phi)$
is the highest among all $\deg_{x_i}J(\phi)$.
Replace $\phi$ by $\sigma\circ\phi$, where
$\sigma:=(x_1+x_{n+1},x_2,\dots,x_n)$. So we
may assume that $x_{n+1}$ appears in $J(\phi)$. By Lemma \ref{infinite}
there exist $a_1,\dots, a_n\in k$,\ $b\in K$ and
$h_1, h_2,\dots,h_n\in R[x_{n+1}]$
(Without loss of generality we may assume $h_1=1$)
such that $$g:=f_1+h_2(x_{n+1})f_2+\dots+h_n(x_{n+1})f_n$$
with gradient (partial derivatives) with respect to
$(x_1,\dots,x_n)$ is $(0,\dots,0)$
at the point
$$P=(a_1,\dots,a_n, b),$$ so $g$
cannot be an $R$-coordinate of $A$. On the other hand, as
$$p:=x_1+h_2(x_{n+1})x_2+\dots+h_n(x_{n+1})x_n$$ is an $R$-linear
coordinate of $A$
with a corresponding $R$-automorphism
$(p, x_2,\dots,x_n)$ of $A$,\ $g=\phi(p)$ is also an $R$-coordinate of $A$. A contradiction.
\qed

\

\noindent {\bf Proof of Theorem \ref{jacobian}.} We may assume $f_n=x_n$, otherwise replace $\phi$
by $\psi\circ\phi$, where $\psi$ is an automorphism
taking the coordinate $f_n$ to $x_n$.
Suppose on the contrary, $J(\phi)$ is not nonzero constant.
If $x_{n}$ does not appear in $J(\phi)$,
we may assume that $\deg_{x_1}J(\phi)$
is the highest among all $\deg_{x_i}J(\phi)$.
Replace $\phi$ by $\sigma\circ\phi$, where
$\sigma:=(x_1+x_{n},x_2,\dots,x_n)$. So we
may assume that $x_{n}$ appears in $J(\phi)$.
By Lemma \ref{zerochar},
there exist $a_1,\dots,a_{n-1}\in k$,\ $b\in K$ and
$h_1,h_2,\dots,h_{n-1},h_n\in k[x_n]$
(without loss of generality we may assume $h_1=1$)
such that the partial derivatives of
$$u:=f_1+h_2(x_n)f_2+\dots+\dots+h_{n-1}(x_n)f_{n-1}+h_n$$
with respect to $(x_1,\dots,x_n)$
is $(0,\dots,0)$ at the point
$$P=(a_1,\dots,a_{n-1},b),$$
hence $u$ cannot be a coordinate of $k[x_1,\dots,x_n]$.
On the other hand, as $$q:=x_1+h_2(x_n)x_2+\dots+h_{n-1}(x_n)x_{n-1}+h_n(x_n)\in k[x_1, \dots, x_n]$$
is a tame coordinate of $k[x_1, \dots, x_n]$ with a corresponding elementary
automorphism
$$(q, x_2,\dots,x_{n-1},x_n),$$ of $k[x_1,\dots,x_n]$,\
$u=\phi(q)$ is also a coordinate. A contradiction.
\end{proof}

\

\noindent {\bf Acknowledgement.}  The authors would like to thank Z.Jelonek
for kindly showing  his very recent result \cite{J} where
Question 1 in this note is solved affirmatively for characteristic zero
and arbitrary $n$ by geometric method
based on Theorem \ref{jacobian} in this note.

\


\begin{thebibliography}{99}

\bibitem{ec} Cheng, C. C.-A; van den Essen, A,
Endomorphisms of the plane sending linear coordinates to coordinates,
{\bf Proc. Amer. Math. Soc. 128} (2000) 1911-1915.


\bibitem{ess} Van den Essen, A; Shpilrain, V, Some
combinatorial questions about polynomial mappings, {\bf J.Pure Appl.Algebra 109}
(1997) 47-52.

 \bibitem {gy1}
 Gong, S.-J; Yu, J.-T, The linear coordinate preserving problem,
 {\bf Comm. Algebra 36} (2008) 1354-1364.

\bibitem{gy} Gong, S.-J; Yu, J.-T,
Test elements, retracts and automorphic orbits,
{\bf J.Algebra  320} (2008) 3062-3068.


\bibitem{essen} Jelonek, Z, A solution of the problem of van den Essen and Shpilrain, {\bf
J. Pure Appl. Algebra 137},  (1999) 49-55.



\bibitem{J} Jelonek, Z, A solution of a question of van den Essen and Shpilrain,
preprint 2011.

\bibitem{ly} Li, Y.-C; Yu, J.-T,
Applications of degree estimate for subalgebras,
{\bf C. R. Acad. Bulgare Sci. 64} (2011) 165-172.



\bibitem{myz} Mikhalev, A. A; Yu, J.-T; Zolotykh, A. A, Images of coordinate polynomials,
 {\bf Algebra Colloq. 4} (1997) 159-162.



\bibitem{yu} Yu, J.-T, Automorphic orbit problem for polynomial algebras,
{\bf J.Algebra 319} (2008) 966-970.



\end{thebibliography}
\end{document}